\numberwithin{equation}{section}
\newtheorem{thm}{Theorem}[section]
\newtheorem{pro}[thm]{Proposition}
\newtheorem{lem}[thm]{Lemma}
\def\con{\subset}
\def\from{\colon}
\def\CL#1{\overline{#1}}
\def\leukfrac#1/#2{\leavevmode
               \kern.1em
                \raise.9ex\hbox{\the\scriptfont0 ${}_#1$}
                \hskip -1pt\kern-.1em
                /\kern-.15em\lower.10ex\hbox{\the\scriptfont0 ${}_#2$}}
\theoremstyle{definition}
\theoremstyle{remark}
\newtheorem{claim}{Claim}
\newcommand\sphere{\mathbb{S}}
\newcommand\interval{\mathbb{I}}
\def\bd{\mathop{\operator@font bd}\nolimits}
\def\diam{\mathop{\operator@font diam}\nolimits}
\begin{document}

\title[Homogeneous continua that are not separated by arcs]
{Homogeneous continua that are not separated by arcs}

\author{J. van Mill}
\address{KdV Institute for Mathematics\\
University of Amsterdam\\
Science Park 105-107\\
P.O. Box 94248\\
1090 GE Amsterdam, The Netherlands}
\email{j.vanMill@uva.nl}

\author{V. Valov}
\address{Department of Computer Science and Mathematics,
Nipissing University, 100 College Drive, P.O. Box 5002, North Bay,
ON, P1B 8L7, Canada} \email{veskov@nipissingu.ca}

\thanks{The first author is pleased to thank the Nipissing University for generous
hospitality and support. The second author was partially supported by NSERC
Grant 261914-13.}

 \keywords{connected space, homogeneous continuum, locally compact separable metric space, locally connected space}

\subjclass[2010]{Primary 54F15; Secondary 54F45}
\begin{abstract}
We prove that if $X$ is a strongly locally homogeneous and locally compact separable metric space and $G$ is a region in $X$ with $\dim G=2$, then $G$ is not separated by any arc in $G$.
\end{abstract}
\maketitle\markboth{}{Homogeneous continua}





\section{Introduction}
By a \emph{space} we mean a separable metric space.  Kallipoliti and Papasoglu~\cite{kp} proved that any locally connected, simply connected, homogeneous metric continuum can not be separated by arcs, and asked if this is true without the assumption of simply connectedness. A partial answer of this question was provided in \cite{vv} for homogeneous metric continua of dimension two having a non-trivial second integral \v{C}ech cohomology group. In the present paper we prove the following partial answer to Kallipoliti and Papasoglu's question.

\begin{thm}\label{eerstestelling}
Let $X$ be a locally compact strongly locally homogeneous space  and $G$ be a region in $X$ with $\dim G=n\geq 2$. Then $G$ is not separated by any arc $J\subset G$.
\end{thm}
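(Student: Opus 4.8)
The plan is to argue by contradiction and to replace the single arc $J$ by a \emph{growing} sub-arc, turning the statement into a ``continuous induction'' along $J$. Suppose $J$ separates $G$, so $G\setminus J=U\cup V$ with $U,V$ disjoint, nonempty and open. Since $\dim J=1<2\le n=\dim G$, the arc has empty interior, so $U\cup V$ is dense, $G=\overline U\cup\overline V$, and connectedness of $G$ forces $\overline U\cap\overline V\ne\emptyset$ (one checks $\overline U\cap\overline V\subset J$). I would first record that, because $X$ is strongly locally homogeneous and $G$ is a region, the local dimension is constant on $G$, so $\dim_xG=n\ge2$ for every $x\in G$; this is what will make an arc ``too thin'' to separate locally.

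Writing $J=J([0,1])$ with endpoints $p=J(0)$ and $q=J(1)$, put $J_t=J([0,t])$ and let $T=\{t\in[0,1]:G\setminus J_t\text{ is connected}\}$. The first step is to show that $T$ is a down-set: if $s<t$ then $G\setminus J_s=(G\setminus J_t)\cup J((s,t])$, and since $J$ is nowhere dense the attached sub-arc $J((s,t])$ lies in the closure of $G\setminus J_t$, so connectedness at $t$ yields connectedness at $s$. Now $1\notin T$ by assumption, while $0\in T$ because a single point does not separate $G$ (this is the degenerate case of the local claim below). Hence there is a first failure $t^{\ast}=\inf\{t:G\setminus J_t\text{ is disconnected}\}$, and the removed set $J_{t^{\ast}}$ is a genuine sub-arc whose \emph{free endpoint} is the tip $z=J(t^{\ast})$.

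The heart of the argument is to localize the new separation at this free tip and then to rule it out. Fixing a separation $G\setminus J_{t^{\ast}}=A\cup B$, I would use that $G\setminus J_t$ is connected for every $t<t^{\ast}$: the only material removed as $t\uparrow t^{\ast}$ is the short whisker $J((t,t^{\ast}])$ shrinking to $z$, so this whisker must carry all of the connection between $A$ and $B$, and a limiting argument then forces $z\in\overline A\cap\overline B$. On the other hand, choosing a small homogeneous chart $W\ni z$ (with $\overline W$ compact in $G$, using local compactness) small enough to miss $J([0,t^{\ast}-\delta])$, the removed set meets $W$ only in a whisker $J((t^{\ast}-\delta,t^{\ast}])$ ending at $z$, while the non-removed continuation $J((t^{\ast},t^{\ast}+\delta))$ merely sits inside the complement and removes nothing. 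Everything thus reduces to the following local claim, which I expect to be the main obstacle: \emph{in a strongly locally homogeneous, locally compact space of local dimension $\ge2$, the complement of an arc ending at $z$ is connected near $z$}, so that $z$ cannot be a two-sided limit of a separation.

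This is exactly where the free endpoint is essential: the claim is false for a line (codimension one, locally two-sided), hence for a simple closed curve, which is precisely why circles do separate but arcs should not. I would prove it by invoking strong local homogeneity---through an Effros-type micro-transitivity to produce controlled small homeomorphisms supported near $z$---to ``pass around'' the free tip, carrying a point on one side of the whisker to the other inside $W\setminus J_{t^{\ast}}$, with the hypothesis $n\ge2$ providing the room to do so without meeting the arc. The resulting contradiction $z\notin\overline A\cap\overline B$ shows that $t^{\ast}$ cannot exist, so $T=[0,1]$, in particular $1\in T$, and therefore $J$ does not separate $G$.
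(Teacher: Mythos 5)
Your ``continuous induction'' scaffolding is sound as far as it goes: the down-set property of $T$ is correct (every point of the whisker $J((s,t])$ lies in $\overline{G\setminus J_t}$ because $J$ is nowhere dense, and a set squeezed between a connected set and its closure is connected), and so is the limiting argument that the tip $z=J(t^\ast)$ must lie in $\overline A\cap\overline B$. But there are two genuine gaps. The smaller one: you need $t^\ast$ to be an \emph{attained} failure time, i.e.\ $G\setminus J_{t^\ast}$ actually disconnected. Since $T$ is a down-set it could just as well be the closed interval $[0,t^\ast]$, with $G\setminus J_t$ disconnected for every $t>t^\ast$ but connected at $t^\ast$ itself; then there is no separation $A\cup B$ to fix and your localization never starts. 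Nothing in the proposal rules this out, and doing so looks as hard as the main claim.

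The major gap is the ``local claim,'' which is not a lemma you can defer: it \emph{is} the theorem in localized form, and the mechanism you propose cannot prove it. Strong local homogeneity (or Effros micro-transitivity) gives a homeomorphism $h$ supported in $W$ with $h(x)=y$ for $x\in A\cap W$, $y\in B\cap W$; but $h$ does not fix the arc, so $h(x)=y$ produces no connected subset of $W\setminus J_{t^\ast}$ joining $x$ to $y$ --- membership in $A$ versus $B$ is simply not transported by $h$. (And if you instead demand $h$ be supported in $W\setminus J_{t^\ast}$, you are applying micro-transitivity inside the very set whose connectedness is in question, which is circular.) Decisively, your mechanism makes no structural use of the free endpoint: the identical argument would ``prove'' that a point interior to an arc, or a point of a circle in $\mathbb{R}^2$, cannot be a two-sided limit of a separation, which is false; you acknowledge the tip is essential, but the sketch gives it only a rhetorical role. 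This is exactly where the paper invests all its effort, by a completely different route: Proposition 2.2 produces small neighborhoods $U$ of the endpoint with $\dim\mathrm{bd}\,U\le 1$ meeting $J$ in a single point $c$; Lemma 3.3 builds a Kuzminov-style minimal, everywhere $2$-dimensional compactum $M_U$ carrying an $\mathbb S^1$-valued map with a non-extendability obstruction; Effros is used to move the whole membrane $M_U$ (not a point) so that it meets both $G_1$ and $G_2$, the obstruction surviving because the homeomorphism fixes the set over which extendability is measured; and Proposition 2.3 converts extendability over each side plus global non-extendability into a non-nullhomotopic map $\beta\colon C\to\mathbb S^1$ on the arc $C=[c,b]$, which is absurd since $C$ is contractible. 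That last step is precisely where ``arc with a free endpoint'' does real work (the transversal $P_1\cap P_2$ is contractible; for a circle it would not be), and it is the step for which your proposal has no substitute.
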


Recall that a space is strongly locally homogeneous if every point $x\in X$ has a local basis of open sets $U$ such that for every $y,z\in U$ there is a homeomorphism $h$ on $X$ with $h(y)=z$ and $h$ is identity on $X\setminus U$. Obviously, every open subset of a strongly locally homogeneous space is also strongly locally homogeneous. Since
strongly locally homogeneous connected spaces are homogeneous, any region $G$ satisfying the hypotheses of Theorem 1.1 should be homogeneous.  We claim that it is locally connected as well.
Indeed, since any strongly homogeneous Polish space is countable dense homogeneous \cite{bp} and a locally compact countable dense homogeneous connected space is locally connected \cite{f}, we have that any region $G$ from Theorem 1.1 is locally connected. (There is also a simple direct proof of this fact.)
According to \cite{kru}, every region of homogeneous locally compact space of dimension $n\geq 1$ can not be separated by a closed set of dimension $\leq n-2$. So, Theorem 1.1 is interesting only for regions $G$ of dimension two.



\section{Some preliminary results}

\begin{lem}\label{retraction}
Let $A$ be a closed nowhere dense subset of $X$ such that $\dim X\setminus A = 0$. Then there is a retraction $r\from X\to A$ such that $r(X\setminus A)$ is countable.
\end{lem}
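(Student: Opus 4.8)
The plan is to exploit the zero-dimensionality of the open set $U = X\setminus A$ by cutting it into countably many clopen pieces that shrink as they approach $A$, sending each piece to a single nearby point of $A$, and declaring $r$ to be the identity on $A$. First I would record the two structural facts I need about $U$. Being an open subspace of a separable metric space, $U$ is itself separable metric, and since $\dim U = 0$ it has a basis of sets that are clopen in $U$. Moreover, being Lindel\"of, $U$ has the property that every open cover admits a refinement by a countable partition into sets clopen in $U$; this is obtained by extracting a countable clopen subcover and disjointifying it ($C_1, C_2\setminus C_1, C_3\setminus(C_1\cup C_2),\dots$, each term being a finite Boolean combination of clopen sets, hence clopen).

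Next I would carry out the construction. For each $x\in U$ set $\delta(x)=\dist(x,A)>0$ and choose a neighbourhood of $x$ that is clopen in $U$ and contained in $B(x,\delta(x)/3)$. Applying the refinement fact to this cover produces a partition $\{W_i\}_{i\in\IN}$ of $U$ into clopen sets with each $W_i\subset B(x_i,\delta(x_i)/3)$ for a suitable $x_i\in U$. For every $i$ I would pick $a_i\in A$ with $\dist(x_i,a_i)<2\delta(x_i)$, and then define $r(a)=a$ for $a\in A$ and $r\equiv a_i$ on each $W_i$. By construction $r$ fixes $A$ pointwise and $r(U)=\{a_i:i\in\IN\}$ is countable, so the only thing left to verify is continuity.

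Since each $W_i$ is clopen in the open set $U$, the map $r$ is locally constant, hence continuous, at every point of $U$; the substance of the argument is therefore continuity at a point $a\in A$. Here I would run the distance estimates: for $w\in W_i$ the bound $\dist(w,x_i)<\delta(x_i)/3$ forces $\dist(w,A)>\tfrac{2}{3}\delta(x_i)$, i.e. $\delta(x_i)<\tfrac{3}{2}\dist(w,A)$, and combining this with $\dist(a_i,w)\le\dist(a_i,x_i)+\dist(x_i,w)<\tfrac{7}{3}\delta(x_i)$ gives $\dist(a_i,a)\le\dist(a_i,w)+\dist(w,a)<\tfrac{9}{2}\dist(w,a)$ for every $a\in A$. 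Thus $\dist(r(w),a)$ is a fixed multiple of $\dist(w,a)$, so $r(w)\to a=r(a)$ as $w\to a$, which yields continuity at $a$.

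The main obstacle is precisely this boundary behaviour. An arbitrary countable clopen partition of $U$ would give a countable image but no control whatsoever at points of $A$; the assignment has to be engineered so that both the diameters of the pieces and their distances to the chosen target points $a_i$ are dominated by the distance to $A$. This is why the clopen partition is forced to refine the cover by balls of radius $\delta(x)/3$, and it is the only place where the hypothesis $\dim(X\setminus A)=0$ (via the existence of such clopen refinements) is genuinely used.
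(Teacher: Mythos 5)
Your proof is correct and follows essentially the same route as the paper: both decompose $X\setminus A$ into countably many disjoint clopen pieces whose diameters are controlled by their distance to $A$, send each piece to a nearby point of $A$, and keep $A$ fixed. The paper only sketches this (citing Knaster--Reichbach), while you supply the explicit clopen refinement and the $\tfrac{9}{2}\,\dist(w,a)$ estimate; the content is the same.
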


\begin{proof}
The technique is similar to that in \cite{KnasterReichbach}. In brief, one constructs a cover $\mathcal{V}= \{V_n : n\in\mathbb N\}$ by disjoint nonempty clopen subsets of $X$ such that
\begin{enumerate}
\item $\diam V_n < d(V_n,A)$ for each $n$,
\item there is a sequence $\{a_n: n\in \mathbb N\}$ in $A$ such that $$\lim_{n\to\infty} d(a_n,V_n) = 0.$$
\end{enumerate}
Then define $r\from X\to A$ as follows: $r(a)=a$ for every $a$ and $r(V_n) = \{a_n\}$ for every $n$. It is easy to check that $r$ is as required.
\end{proof}

If $J$ is an arc and $p,q\in J$, then $(p,q)$ and $[p,q]$ denote, respectively, the open and closed subintervals in $J$ with endpoints $p,q$.
\begin{pro}\label{neighborhood}
Let $J=[a,b]$ be an arc in a space $X$ which is everywhere 2-dimensional. Then $b$ has arbitrarily small open neighborhoods $U$ such that $\bd(U)$ is at most 1-dimensional and intersects $J$ in exactly one point.
\end{pro}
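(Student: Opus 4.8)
My plan is to reformulate the conclusion as a partition problem on $J$ and then to realize the required neighborhood as a suitably thin partition of $X$.

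First I fix a homeomorphism $\gamma\from[0,1]\to J$ with $\gamma(0)=b$ and $\gamma(1)=a$, and I record the following reformulation. Suppose $q\in J$ is an interior point, say $q=\gamma(\tau)$ with $0<\tau<1$, and $U$ is open with $b\in U$, $U\cap J=[b,q)$ and $\overline U\cap(q,a]=\emptyset$. Then $q\in\overline{[b,q)}\subseteq\overline U$ while $q\notin U$, so $q\in\bd(U)$; moreover $\overline U\cap J=[b,q]$, whence $\bd(U)\cap J=(\overline U\cap J)\setminus U=\{q\}$. Thus it suffices to produce, for $q$ arbitrarily close to $b$, an open $U$ with these three properties and with $\dim\bd(U)\le 1$, the smallness of $U$ being guaranteed by taking $q$ near $b$. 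I emphasize that a Tietze extension of $t\mapsto t-\tau$ from $J$ to $X$ already yields an open set whose frontier meets $J$ in the single point $q$; the whole difficulty is to obtain such a set with a one-dimensional frontier.

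To control the frontier I use the separation theorem of dimension theory. Since $X$ is everywhere $2$-dimensional, $\dim X=2$, and because $\ind X=\dim X$ the point $b$ has a neighborhood basis of open sets $V$ with $\dim\bd(V)\le 1$; I choose such a $V$ as small as desired and with $a\notin\overline V$. Letting $s=\sup\{t:\gamma([0,t])\subseteq V\}$, I have $\gamma([0,s))\subseteq V$, so I may pick parameters $\tau'<\tau<\tau''<s$ and set $q=\gamma(\tau)$. Now put $A=\gamma([0,\tau'])$ and $B=(X\setminus V)\cup\gamma([\tau'',1])$. These are disjoint closed sets, $b\in A$, and on the arc one has $A\cap J=[b,\gamma(\tau')]$ and $B\cap J=[\gamma(\tau''),a]$ (the part of $J$ outside $V$ has parameter $\ge s>\tau''$), for which the single point $q$ is a partition in $J$ between $A\cap J$ and $B\cap J$. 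Applying the relative form of the separation theorem — for $\dim X\le 2$ a partition between two disjoint closed sets can be taken of dimension $\le 1$, and it can be prescribed on the closed subspace $J$ to equal a given partition there — I obtain an open $U$ with $A\subseteq U$, $\overline U\cap B=\emptyset$, $\dim\bd(U)\le 1$ and $\bd(U)\cap J=\{q\}$. Since $X\setminus V\subseteq B$, we get $\overline U\subseteq V$, so $U$ is as small as $V$; and $\overline U\cap(q,a]=\emptyset$ with $U\cap J=[b,q)$, as required.

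The main obstacle is precisely this last step: forcing $\dim\bd(U)\le 1$ and $\bd(U)\cap J=\{q\}$ at the same time. The dimension bound is the classical separation (partition) theorem and uses $\dim X\le 2$; the one-point trace is an extension phenomenon. What must be checked with care is that the two can be arranged simultaneously, i.e.\ that a thin partition of $X$ may be made to restrict on the closed set $J$ to a prescribed partition there. This is a relative separation theorem, and it is dimensionally consistent in our situation because $\dim J=1$ and a single point is a $0$-dimensional — hence codimension-one — partition of the arc $J$; establishing or citing this relative theorem is the technical heart of the argument.
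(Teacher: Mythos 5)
Your reduction (replace the neighborhood problem by the problem of finding a partition of $X$ between a subarc $A\ni b$ and a closed set $B\supset(X\setminus V)\cup[\,\text{far subarc}\,]$ whose trace on $J$ is one point and whose dimension is at most $1$) is sound, and it is in fact the same shape of problem the paper solves. But the step you yourself call ``the technical heart'' is a genuine gap, not something you can cite. There is no off-the-shelf ``relative separation theorem'' that produces a partition which \emph{simultaneously} has dimension $\le 1$ and has prescribed trace $\{q\}$ on the closed subspace $J$. The two standard results each give exactly one half: the partition-extension lemma (\cite[Lemma 3.1.4]{vm:book:twee}, which is what the paper invokes) yields a partition $S$ with $S\cap Z\con C$ for a subspace $Z$ and a partition $C$ of $Z$, with no control on $\dim S$; and the separation theorem of dimension theory yields a partition of dimension $\le 1$ with no control on its trace. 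Merging the two is precisely the content of the proposition being proved, so invoking the merged statement as a known theorem is circular.

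The paper's proof shows what the merge actually costs, and also shows that your formulation --- with the cut point $q$ prescribed in advance --- aims at more than the method delivers. The paper writes the ambient neighborhood $U$ as $D\cup(U\setminus D)$ with $D$ dense, zero-dimensional, disjoint from $J$, and $\dim (U\setminus D)\le 1$, and then applies \cite[Lemma 3.1.4]{vm:book:twee} not to the subspace $A=J\cup\bd U$ but to the \emph{enlarged} subspace $A\cup D$: once the partition $S$ satisfies $S\cap(A\cup D)\con\{s\}$, it misses $D$, hence $S\con U\setminus D$ and $\dim S\le 1$ falls out automatically. The price is that $\{s\}$ must be a partition of $A\cup D$, not merely of $A$; the dense set $D$ can connect the two sides of $A$ around any given point. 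This is handled by the Knaster--Reichbach retraction of Lemma~\ref{retraction} (\cite{KnasterReichbach}): there is a retraction $r\from C\cup D\to C$ with $r(D)$ countable, and the cut point must be chosen \emph{outside} the countable set $r(D)$, after which $r^{-1}$ turns the partition of $A$ into a partition of $A\cup D$. So the point of separation cannot be prescribed beforehand; it is only shown to exist outside a countable exceptional set (which is all the proposition needs, since such points are dense near $b$). To complete your argument you need exactly this mechanism, or a substitute for it: fix the decomposition $D$, build the retraction $r$, choose $q$ avoiding $r(D)$, verify that $\{q\}$ partitions $(J\cup\bd V)\cup D$ between the appropriate preimages under $r$, and only then apply the partition-extension lemma.
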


\begin{proof}
Fix $\varepsilon > 0$ and let $U$ be an open neighborhood of $b$ in $X$ such that $\diam \CL{U}< \varepsilon$ and $\dim \bd U\le 1$. We may assume without loss of generality that $J\setminus U \not=\emptyset$ and $J\cap U$ is uncountable. Put $Y = J \cup \CL{U}$. Moreover, put $A = J\cup \bd U$, $B= (J\setminus U)\cup \bd U$ and
$C=(J\cap\overline U)\cup\bd U$, respectively.

Let $D$ be a zero-dimensional dense subset of $U$ such that $\dim U\setminus D=1$. Since $\dim J = 1$, we may clearly assume that $D\cap J = \emptyset$.

Because $C$ is a closed nowhere dense subset of $C\cup D$, there is a retraction $r_1:C\cup D\to C$ such that $r_1(D)$ is countable (Lemma~\ref{retraction}).
Let $r\from A\cup D\to A$ be defined by $r(x)=r_1(x)$ if $x\in C\cup D$ and $r(x)=x$ if $x\not\in C\cup D$. Obviously $r$ is a retraction such that $r(D)$ is countable.  Pick an arbitrary $s\in U\cap J$ such that $s\not= b$, $[s,b]\con U$ and $s\not\in r(D)$. Choose also two points $s_1, s_2\in J\cap U$ different from $s$ and $b$ such that $s\in (s_1,s_2)$, and let
$V_1=A\setminus [s_1,b]$ and $V_2=(s_2,b]$. Obviously $V_1$ and $V_2$ are open subsets of $A$ containing $B$ and $\{b\}$, respectively. Moreover,
$\overline V_1=A\setminus (s_1,b]$ and $\overline V_2=[s_2,b]$.

\begin{claim}
$\{s\}$ is a partition in $A$ between $\overline V_1$ and $\overline V_2$.
\end{claim}

Indeed, put $P= [s,b]$ and $Q= [a,s]\cup \bd U$. Then $P$ and $Q$ are closed subsets of $A$ such that $P\cup Q = A$, $\overline V_2\subset P$, $\overline V_1\con Q$ and $P\cap Q=\{s\}$.

\medskip

\begin{claim}
$\{s\}$ is a partition in $A\cup D$ between $r^{-1}(\overline V_1)$ and $r^{-1}(\overline V_2)$.
\end{claim}

Since $r^{-1}(s) = \{s\}$, this is a direct consequence of Claim~1.

\medskip

By \cite[Lemma 3.1.4]{vm:book:twee}, there is a partition $S$ between $\{b\}$ and $B$ in $Y$ such that $S\cap (A\cup D) \con \{s\}$. If $s\not\in S$, then $S\cup \{s\}$ is also a partition between $\{b\}$ and $B$ in $Y$, hence we may assume without loss of generality that $s\in S$. But then $S\cap J = \{s\}$.
Write $Y\setminus S$ as $E\cup F$, where $E$ and $F$ are disjoint relatively open subsets of $Y$ such that $b\in E$ and $B\con F$.

\begin{claim}
$E\con U$.
\end{claim}

Indeed, since $E\cap B = E\cap {\big (}(J\setminus U)\cup \bd U{\big )} =\emptyset$, this is clear.

\medskip

Since $E$ is open in $U$ and $U$ is open in $X$ we have that $E$ is open in $X$. Moreover, $\diam E < \varepsilon$. Also, $E\cup S$ is closed in $Y$ and hence in $X$. As a consequence $\bd E\con S$. Since $S\con U\setminus D$, we have $\dim S\le 1$, as required.
\end{proof}
It will be convenient to use additive notation for the topological group $\sphere^1$.

The following result can be proved by tools from algebraic topology. For the convenience of the reader, we include a simple direct proof.

\begin{pro}\label{promembrane}
Let $X$ be a space and let $A$ be a closed subspace of it. Moreover, let $\gamma\from A\to \sphere^1$ be continuous.
Suppose that there are closed subsets $P_1,P_2$ of $X$ satisfying the following conditions:
\begin{itemize}
\item $P_1\cup P_2=X$ and if $C=P_1\cap P_2$ then $C\cap A$ is a singleton, say $c$;
\item $\gamma| P_i\cap A$ is extendable over $P_i$ for each $i=1,2$, but $\gamma$ is not extendable over $X$.
\end{itemize}
Then there is a continuous function $\beta\from C\to \mathbb S^1$ such that $\beta (c) = 0$ and $\beta$ is not nullhomotopic.
\end{pro}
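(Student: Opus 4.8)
The plan is to exhibit $\beta$ explicitly as the discrepancy between the two given extensions along $C$, and then to show that its non-nullhomotopy is forced by the non-extendability of $\gamma$. Concretely, for $i=1,2$ I would fix a continuous extension $\gamma_i\from P_i\to\sphere^1$ of $\gamma|P_i\cap A$; since $c\in C\cap A\con P_i\cap A$, both $\gamma_i$ agree at $c$ with $\gamma(c)$. I then define $\beta\from C\to\sphere^1$ by $\beta=\gamma_1|C-\gamma_2|C$ in the additive group $\sphere^1$. This is continuous and satisfies $\beta(c)=\gamma_1(c)-\gamma_2(c)=0$, so the whole matter reduces to proving that $\beta$ is not nullhomotopic.

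For that I would argue by contradiction: assuming $\beta$ nullhomotopic, I would manufacture a continuous extension of $\gamma$ over all of $X$, contradicting the hypothesis. The mechanism is to correct $\gamma_2$ by a term that repairs the discrepancy $\beta$ on $C$ while leaving $\gamma_2$ untouched on $P_2\cap A$. Nullhomotopy of $\beta$ lets me lift it through the universal covering $p\from\IR\to\sphere^1$ to a continuous $\widetilde\beta\from C\to\IR$ with $p\circ\widetilde\beta=\beta$, and after subtracting the integer $\widetilde\beta(c)\in p^{-1}(0)=\IZ$ I may assume $\widetilde\beta(c)=0$. Since $C$ and $P_2\cap A$ are closed in $P_2$ and meet only in $c$, the map that equals $\widetilde\beta$ on $C$ and $0$ on $P_2\cap A$ is a well-defined continuous $\IR$-valued map on $C\cup(P_2\cap A)$; as $\IR$ is an absolute extensor it extends to some $\delta\from P_2\to\IR$. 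Setting $\gamma_2'=\gamma_2+p\circ\delta$ produces an extension of $\gamma|P_2\cap A$ over $P_2$ that now agrees with $\gamma_1$ on $C$, so $\gamma_1$ and $\gamma_2'$ glue along the closed set $C=P_1\cap P_2$ to a continuous $\Gamma\from X\to\sphere^1$ with $\Gamma|A=\gamma$, the desired contradiction.

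The step I expect to carry the real weight is the passage from ``nullhomotopic'' to ``liftable to $\IR$'': this converts the $\sphere^1$-valued obstruction into an $\IR$-valued extension problem, where the Tietze extension theorem applies because $\IR$ is an absolute extensor. I would justify the lift through the homotopy lifting property of the covering $p\from\IR\to\sphere^1$, which holds for an arbitrary space $C$, so that no path- or local connectivity of $C$ is needed; the hypothesis $\beta(c)=0$ is then exactly what lets me normalize the lift so that $\delta$ vanishes on $P_2\cap A$ and hence does not disturb the values of $\gamma_2$ there. Everything else---continuity of $\beta$, the pasting of $\gamma_1$ and $\gamma_2'$ on the closed set $C$, and the identity $\Gamma|A=\gamma$ via $A=(P_1\cap A)\cup(P_2\cap A)$---is routine.
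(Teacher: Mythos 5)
Your proof is correct, and it shares the paper's overall skeleton: the same discrepancy map $\beta=\gamma_1|C-\gamma_2|C$, the same contradiction scheme in which one adds to $\gamma_2$ a correction term that equals $\beta$ on $C$ and vanishes on $P_2\cap A$, and the same final gluing of the two extensions along the closed set $C=P_1\cap P_2$. Where you genuinely diverge is the mechanism producing the correction $\delta$ on $P_2$. The paper stays entirely inside $\sphere^1$: it normalizes the nullhomotopy by setting $S(x,t)=H(x,t)-H(c,t)$ so that the point $c$ is held at $0$ throughout, assembles from $S$ and the constant $0$ a homotopy on $\bigl(C\cup(P_2\cap A)\bigr)\times\interval$ whose initial map extends (as the constant $0$) over $P_2$, and then invokes the Borsuk Homotopy Extension Theorem for maps into the \ANR{} $\sphere^1$ to extend the terminal map to $\delta\from P_2\to\sphere^1$. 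You instead pass to the universal cover: the nullhomotopy together with the homotopy lifting property of $p\from\IR\to\sphere^1$ (which indeed holds for an arbitrary domain, with no path- or local connectivity hypotheses, so your remark on that point is accurate) yields a lift $\widetilde\beta\from C\to\IR$ of $\beta$, which you normalize by subtracting the integer $\widetilde\beta(c)$, extend over $P_2$ by Tietze, and push back down via $p$. Your route is somewhat more elementary in its ingredients --- classical covering space theory plus the Tietze theorem replace the \ANR-based homotopy extension theorem --- and it isolates cleanly why nullhomotopy is the operative hypothesis: it is exactly what makes $\beta$ liftable to the contractible cover, where every extension problem is trivial. The paper's route avoids covering space theory altogether at the cost of citing the Borsuk theorem as a black box. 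Note that both arguments hinge on the same normalization at $c$ (your $\widetilde\beta(c)=0$, the paper's $S(c,t)=0$ for all $t$); this is what guarantees the correction term is trivial on $P_2\cap A$ and hence does not disturb the values of the extension on $A$.
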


\begin{proof}
Let $\alpha_i\from P_i\to \mathbb S^1$ for $i=1,2$ be a continuous extension of $\gamma| P_i\cap A$. Define $\beta\from C\to \mathbb S^1$ by $\beta(x) = \alpha_1(x)-\alpha_2(x)$ $(x\in C)$. Then, clearly, $\beta (c) = 0$. We claim that $\beta$ is as required, and argue by contradiction. Assume that $\beta$ is nullhomotopic. Let $H\from C\times \interval\to \mathbb S^1$ be a homotopy such that $H_0 \equiv 0$ and $H_1=\beta$. Define $S\from C\times \interval\to \mathbb S^1$ by $S(x,t) = H(x,t) - H(c,t)$. Then $S_0 \equiv 0$, $S_1 = \beta$ and $S(c,t)=0$ for every $t$. Define a homotopy $T\from (C\cup (P_2\cap A))\times\interval \to \mathbb S^1$ by
$$
    T(x,t) = \begin{cases}
              S(x,t) & (x\in C, t\in \interval), \\
              0      & (x\in P_2\cap A, t\in\interval).
            \end{cases}
$$
Then $T_0 \equiv 0$ and hence can be extended to the constant function with value 0 on $P_2$. By the Borsuk Homotopy Extension Theorem~\cite[1.4.2]{vm:book:twee}, the function $T_1$ can be extended to a continuous function $\delta\from P_2\to \sphere^1$. Now define $\varepsilon\from X\to \sphere^1$ as follows:
$$
    \varepsilon | P_1 = \alpha_1, \quad \varepsilon | P_2 = \delta + \alpha_2.
$$
If $x\in C$, then $\varepsilon|P_1 (x) = \alpha_1(x)$ and $\varepsilon|P_2 (x)= \delta(x) + \alpha_2(x) = S_1(x) +\alpha_2(x) = \beta(x) + \alpha_2(x) = \alpha_1(x)$. Hence $\varepsilon$ is well defined and continuous. Also observe that if $x\in P_2\cap A$, then
$$
    \varepsilon(x) = 0 + \alpha_2(x) = \alpha_2(x).
$$
Hence $\varepsilon$ extends $\gamma$, which is a contradiction.
\end{proof}

\section{Proof of Theorem 1.1}

 Throughout, let $X$ be a locally compact and strongly locally homogeneous space, and $G$ be a region in $X$ of dimension $2$. Suppose $G$ is separated by an arc $J=[a,b]\subset G$.
 Recall that $G$ is homogeneous and locally connected (see \S 1).
 Write $G\setminus J$ as $G_1\cup G_2$, where $G_1$ and $G_2$ are disjoint nonempty open subsets of $G$. Everywhere below $\overline K$ denotes the closure of $K$ in $G$ for any set $K\subset G$.




We say that a space $Y$ has no local cut
points if no connected open subset $U \con Y$ has a cut point.

\begin{lem}\label{eerstelem}
$G$ has no local cutpoints.
\end{lem}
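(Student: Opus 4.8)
The plan is to deduce the lemma directly from the separation theorem of \cite{kru} recalled in \S1, the point being that every connected open subset of $G$ is again a region of the same homogeneous, locally compact, two-dimensional space.

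Concretely, I would fix an arbitrary connected open set $U\con G$ and an arbitrary point $q\in U$, and show that $q$ is not a cut point of $U$. Since $U$ is connected and open in $G$, it is a region of $G$; and by the discussion in \S1, $G$ is homogeneous and locally compact with $\dim G=2$. The singleton $\{q\}$ is a closed subset of $U$ with $\dim\{q\}=0=n-2$, where $n=\dim G=2$. Hence \cite{kru} applies and tells us that $U$ cannot be separated by $\{q\}$; that is, $U\setminus\{q\}$ is connected. As $U$ and $q$ were arbitrary, no connected open subset of $G$ has a cut point, which is precisely the assertion that $G$ has no local cut points.

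If one prefers to apply \cite{kru} with $U$ itself as the ambient space rather than $G$, the same argument works: by \S1 an open subset of the strongly locally homogeneous space $X$ is strongly locally homogeneous, so the connected set $U$ is homogeneous, and it is locally compact as an open subset of $X$. The only point then requiring a word is that $\dim U=2$. This follows from homogeneity: the images $g(U)$ under self-homeomorphisms $g$ of $G$ cover $G$, so by the Lindel\"of property countably many of them already cover $G$; choosing a closed subset inside each and using monotonicity of $\dim$ together with the countable closed sum theorem gives $\dim G\le\dim U$, while $\dim U\le\dim G=2$ is automatic. Thus $\dim U=2$ as well.

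I do not expect a genuine obstacle here: once the statement is recognized as the ``separation by a zero-dimensional set'' instance of \cite{kru} (legitimate exactly because $n=2$, so that $n-2=0$), the proof is a matter of matching hypotheses. The only mildly nontrivial ingredient is the constancy of covering dimension on nonempty open subsets of the homogeneous space $G$, and even this is avoided if one applies \cite{kru} with ambient space $G$ rather than with $U$.
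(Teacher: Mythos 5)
Your proposal is correct and takes essentially the same route as the paper: its one-line proof likewise invokes Krupski's theorem from \cite{kru}, in the form that every nonempty open connected subset of $G$ is a $2$-dimensional Cantor manifold and hence cannot be separated by a zero-dimensional closed set, in particular not by a point. Your extra remarks (that $U$ itself is homogeneous and that $\dim U=2$ by homogeneity and the countable sum theorem) are fine but, as you note, unnecessary when $G$ is taken as the ambient space.
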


\begin{proof}
By Kruspki~\cite[Theorem 2.1]{kru} it follows that every nonempty open connected subset $U$ of $G$ is a Cantor manifold of dimension~2. Hence $U$ cannot be separated by a zero-dimensional closed set.
\end{proof}

A space $X$ is \emph{crowded} if it has no isolated points.

\begin{lem}\label{crowded}
The set $S= \CL{G}_1\cap \CL{G}_2$ is a 1-dimensional closed and crowded subspace of $J$ which separates $G$.
\end{lem}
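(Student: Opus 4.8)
The plan is to verify the constituent assertions—$S\con J$, $S$ closed, $S$ separates $G$, and $\dim S=1$ together with crowdedness—more or less independently, leaning on Lemma~\ref{eerstelem} for the two substantive ones.

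First, the routine part. Since $\overline{G}_1$ and $\overline{G}_2$ are closed in $G$, so is their intersection $S$. If a point lay in $S\cap G_1$, then, $G_1$ being open and disjoint from $G_2$, it would have a neighbourhood missing $G_2$ and hence miss $\overline{G}_2$; so $S\cap G_1=\emptyset$, and symmetrically $S\cap G_2=\emptyset$. Thus $S\con G\setminus(G_1\cup G_2)=J$, and in particular $\dim S\le\dim J=1$.

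Next I would show that $S$ separates $G$. Put $W_1=(G\setminus S)\setminus\overline{G}_2$ and $W_2=(G\setminus S)\setminus\overline{G}_1$; these are open in $G$ because $G\setminus S$ is open and the $\overline{G}_i$ are closed. They cover $G\setminus S$, since $W_1\cup W_2=(G\setminus S)\setminus(\overline{G}_1\cap\overline{G}_2)=G\setminus S$, and $G_1\con W_1$, $G_2\con W_2$ shows both are nonempty. The one point needing care is disjointness: $W_1\cap W_2=(G\setminus S)\setminus(\overline{G}_1\cup\overline{G}_2)$ consists of points of $J$ possessing a neighbourhood in $G$ that misses $G_1\cup G_2$ and hence lies inside $J$; since $G$ is everywhere $2$-dimensional while $\dim J=1$, no open subset of $G$ can be contained in $J$, so $W_1\cap W_2=\emptyset$. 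Hence $G\setminus S=W_1\sqcup W_2$ is a separation.

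Finally, the dimension lower bound and crowdedness both reduce to Lemma~\ref{eerstelem} and the Cantor-manifold fact invoked in its proof. Because $G$ is itself a connected open set, it is a $2$-dimensional Cantor manifold and so cannot be separated by a closed set of dimension $\le 0$; as $S$ is closed and separates $G$, this forces $\dim S\ge 1$, whence $\dim S=1$. For crowdedness, suppose $x\in S$ were isolated in $S$; using local connectedness, choose a connected open $U\ni x$ with $U\cap S=\{x\}$. Since $x\in\overline{G}_1\cap\overline{G}_2$, every neighbourhood of $x$ meets both $G_1\con W_1$ and $G_2\con W_2$, so $U\setminus\{x\}=(U\cap W_1)\sqcup(U\cap W_2)$ is a separation into nonempty open sets, making $x$ a cut point of the connected open set $U$ and contradicting Lemma~\ref{eerstelem}. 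Thus $S$ is crowded. I expect the separation step to be the main obstacle, specifically the disjointness of $W_1$ and $W_2$: this is precisely the place where everywhere-$2$-dimensionality of $G$ is indispensable, the remaining assertions being short reductions to Lemma~\ref{eerstelem}.
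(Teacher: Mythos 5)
Your proposal is correct and follows essentially the same route as the paper: your disjointness claim $W_1\cap W_2=\emptyset$ is exactly the paper's key step $G=\CL{G}_1\cup\CL{G}_2$ (proved there, as by you, from the fact that homogeneity makes $G$ everywhere $2$-dimensional, so no open set can lie in $J$), and the crowdedness and $\dim S\ge 1$ arguments via Lemma~\ref{eerstelem} and Krupski's Cantor-manifold theorem are identical to the paper's. The only difference is cosmetic: you phrase the separation with the open sets $W_i$ where the paper uses the relatively clopen sets $\CL{G}_i\setminus S$.
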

\begin{proof}
Assume first that $J\setminus (\CL{G}_1\cup \CL{G}_2) \not=\emptyset$. Then $G$ is somewhere at most 1-dimensional. Hence $G$ is at most 1-dimensional at every point by homogeneity. But this contradicts $G$ being 2-dimensional.

Hence $J\con \CL{G}_1\cup \CL{G}_2$ and so $G=\CL{G}_1\cup \CL{G}_2$.
If $S$ is empty, then $G$ is covered by the disjoint nonempty closed sets $\CL{G}_1$ and $\CL{G}_2$ which contradicts the connectivity of $G$.

Now assume that $x$ is an isolated point of $S$. Let $U$ be an open connected neighborhood of $x$ in $G$ such that $U\cap S = \{x\}$. Then $x$ is a cutpoint of $U$. But this contradicts Lemma~\ref{eerstelem}.

We conclude that $S$ separates $G$ and consequently has to be 1-dimensional by Krupski~\cite{kru}.
\end{proof}

Let $s$ be the maximum of $S$ (as a subset of $[a,b]$). Then $J_s=[a,s]$ also separates $G$ and $G\setminus J_s$ is the union of the disjoint open sets $G_1'$ and $G_2'$, where $G_i'=\overline G_i\setminus J_s$. Moreover, $s\in\overline G_1'\cap\overline G_2'$.
Hence, we can assume without loss of generality that $b\in\CL{G}_1\cap\CL{G}_2$.

\begin{lem}\label{kuzminov}
There is an open neighborhood $U\subset G$ of $b$ having compact closure and a compact set $F\subset G$ such that for every open neighborhood $V$ of $b$ with $\CL{V}\con U$ there exist a compact set  $M_U\subset\CL U$ and a continuous function $f\from \bd_F(U\cap F) \to \sphere^1$ such that:
\begin{itemize}
\item[(1)] $b\in U\cap F$;
\item[(2)] $M_U$ is everywhere $2$-dimensional and $M_U\cap V\neq\varnothing$;
\item[(3)] $\dim \bd U \le 1$ and $J\cap\bd U$ is a point;
\item[(4)] $f$ is not extendable over $\bd_F(U\cap F)\cup M_U$, but it is extendable over $\bd_F(U\cap F)\cup P$ for every proper closed set $P$ of $M_U$.
\end{itemize}
\end{lem}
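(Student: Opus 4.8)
The plan is to produce the neighborhood $U$, the compact set $M_U$, and the $\sphere^1$-valued function $f$ that together witness a kind of "membrane" obstruction at the endpoint $b$, which later (via Proposition~\ref{promembrane}) will yield the contradiction that refutes the assumption that $J$ separates $G$. Let me sketch how I would assemble these pieces.

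\medskip

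First I would fix a small open neighborhood $U$ of $b$ using Proposition~\ref{neighborhood}: since $G$ is everywhere $2$-dimensional (by Lemma~\ref{crowded} together with homogeneity), the point $b$ has arbitrarily small open neighborhoods $U$ with $\dim\bd U\le 1$ and $J\cap\bd U$ a single point, giving condition (3) immediately. Because $G$ is locally compact I may shrink $U$ so that $\CL U$ is compact, and I would arrange $b\in U$. This handles the "ambient" structure. The compact set $F$ will be taken to be $\CL U$ (or a suitable compact neighborhood of $b$ inside $G$ with $\CL U\subset F$), so that $U\cap F$ is a compact neighborhood of $b$ and $\bd_F(U\cap F)$ is well defined; condition (1), $b\in U\cap F$, then holds by construction.

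\medskip

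The heart of the argument is the construction of $M_U$ satisfying (2) and, together with $f$, condition (4). Given an arbitrary open $V$ with $\CL V\subset U$, I expect to produce an everywhere $2$-dimensional compact subset $M_U\subset\CL U$ meeting $V$ by invoking the Cantor-manifold structure supplied by Lemma~\ref{eerstelem} (via Krupski) and a standard selection of an irreducible everywhere $2$-dimensional compactum inside $U$ that reaches into $V$; this is the content of (2). For condition (4) the idea is to choose a continuous $f\from\bd_F(U\cap F)\to\sphere^1$ that is \emph{essential} relative to $M_U$: it does not extend over $\bd_F(U\cap F)\cup M_U$, but does extend once any proper closed subset $P\subsetneq M_U$ is removed from the target domain. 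The natural way to guarantee such an $f$ exists is to use the fact that $M_U$ carries nontrivial $1$-dimensional \v{C}ech cohomology over $\sphere^1$ together with minimality: one selects $M_U$ to be irreducible with respect to the property of carrying the obstruction, so that deleting any proper closed piece kills the obstruction and restores extendability, which is exactly the dichotomy demanded in (4).

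\medskip

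The main obstacle, I anticipate, is the simultaneous control required in (4): one must produce a single map $f$ on the \emph{fixed} set $\bd_F(U\cap F)$ whose extendability is governed precisely by passing from proper closed subsets of $M_U$ to all of $M_U$. Establishing the positive half (extendability over $\bd_F(U\cap F)\cup P$ for every proper closed $P\subset M_U$) should follow from a dimension-theoretic or cohomological argument showing that proper closed subsets of an irreducibly essential $2$-dimensional compactum are cohomologically "small" in the relevant degree, hence pose no obstruction, while the negative half (non-extendability over all of $M_U$) is precisely the irreducibility defining $M_U$. The delicate point is arranging $M_U$ so that this irreducible-essentiality holds while also meeting $V$ and lying in $\CL U$; I would build $M_U$ by a Zorn's-lemma or transfinite minimization argument over the compact subsets of $\CL U$ that are everywhere $2$-dimensional, intersect $V$, and support a non-extendable $f$, then extract a minimal such $M_U$, whose minimality yields (4).
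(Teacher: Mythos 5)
Your proposal has the right outer shape (a minimality/irreducibility argument delivering condition (4)), but it has a genuine gap at its core, which you yourself flag as ``the main obstacle'' without resolving: the existence of even one pair $(M_U,f)$ where $f$ is defined on the \emph{fixed} set $\bd_F(U\cap F)$ and fails to extend over $\bd_F(U\cap F)\cup M_U$. Your two proposed sources for this are both inadequate. First, building $M_U$ as an everywhere $2$-dimensional Cantor manifold (via Krupski) and then appealing to nontrivial first \v{C}ech cohomology of $M_U$ fails on two counts: an everywhere $2$-dimensional Cantor manifold can have trivial $H^1$ (a $2$-cell already does), and even when $H^1(M_U)\neq 0$ this produces an essential map \emph{defined on} $M_U$, not a map on the external set $\bd_F(U\cap F)$ whose extendability is obstructed by $M_U$. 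Second, the Zorn/transfinite minimization over compacta that ``support a non-extendable $f$'' is circular: minimization can only extract a minimal member from a nonempty family, and nonemptiness is exactly what has to be proved. Relatedly, your choice $F=\CL{U}$ makes $\bd_F(U\cap F)=\bd U$, and $\dim\CL{U}=2$ does \emph{not} imply that some map $\bd U\to\sphere^1$ fails to extend over $\CL{U}$; the extension-theoretic characterization of dimension only yields a non-extendable map from \emph{some} closed subset of $\CL{U}$, which need not be anchored at $\bd U$.

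The paper resolves precisely this point by reversing the order of construction. It first extracts the obstruction from the dimension hypothesis: since a compact neighborhood of $b$ is $2$-dimensional, there are a compactum $Y$, a closed set $A\subset Y$ and $g\from A\to\sphere^1$ not extendable over $Y$; then $F$ is taken to be a \emph{minimal} closed subset of $Y$ containing $A$ over which $g$ does not extend. This minimality does double duty: it makes $F\setminus A$ everywhere $2$-dimensional, and (after invoking Effros' theorem, i.e.\ strong local homogeneity, to move $b$ into $F\setminus A$, and choosing $U$ by Proposition~\ref{neighborhood} with $\CL{U}\cap A=\varnothing$) it guarantees for each $V$ an extension $f_V$ of $g$ over $F\setminus V$ that does not extend over $F$. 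The function $f$ is then $f_V|\bd_F(U\cap F)$ (note it depends on $V$), and a gluing argument shows $f$ cannot extend over $\CL{U\cap F}$ while it does extend over $(\CL{U\cap F})\setminus V$. Only \emph{then} is $M_U$ defined, as a minimal closed subset of $\CL{U\cap F}$ obstructing $f$; everywhere $2$-dimensionality of $M_U$, the fact that $M_U\cap V\neq\varnothing$, and the extendability dichotomy in (4) all fall out of that minimality. In short: the paper constructs $f$ first and $M_U$ from $f$; you attempt to construct $M_U$ first and conjure $f$ afterwards, and that step cannot be completed.
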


\begin{proof}
Choose a compact neighborhood $O_b$ of $b$ in $G$. Since every neighborhood of $b$ is of dimension $2$,
there is a compact subset $Y\subset O_b$, a closed set $A\subset Y$ and a continuous function $g\from A \to \sphere^1$ not extendable over $Y$. Let $F$ be a minimal closed subset
of $Y$ containing $A$ such that $g$ is not extendable over $F$. Then for every open subset $W$ of $F\setminus A$ with $\overline W\cap A=\varnothing$ there is a function $f_W:F\setminus W\to\mathbb S^1$ extending $g$ such that $f_W$ can not be extended to a continuous function $\bar f_W:F\to\mathbb S^1$. This means that
$f_W|\bd_FW$ is not extendable over $\overline W$. Consequently, $F\setminus A$ is everywhere two-dimensional.
We can assume by homogeneity that $b\in F\setminus A$. Indeed, by Effros' theorem \cite{e}, we take $O_b$ so small that for every point $x\in O_b$ there is a homeomorphism $h$ on $G$ with $h(b)=x$ and $O_b\subset h(G)$. Then, consider the set $h(G)$ instead of $G$.

By Proposition~\ref{neighborhood}, there are an open neighborhood $U$ of $b$ whose closure in $G$ is a compact  and a point $c\in (a,b)$ such that $\bd U \cap J = \{c\}$, $\dim\bd U\leq 1$ and $\overline U\cap A=\varnothing$. Suppose $V$ is an open neighborhood of $b$ such that $\overline V\subset U$, and consider
a continuous function $f_V:F\setminus V\to\mathbb S^1$ extending $g$ which is not extendable over $F$. Let $f=f_V|\bd_F(U\cap F)$.
Clearly, $f$ cannot be extended to a continuous function $\bar f\from \CL{U\cap F}\to \sphere^1$, but $f$ can be extended to a continuous function from
$(\overline{U\cap F})\setminus V$ into $\mathbb S^1$. Let $M_U$ be a minimal closed subset of $\overline{U\cap F}$
with the property that $f$ cannot be extended to a continuous function
$\widetilde f:\bd_F(U\cap F)\cup M_U\to\mathbb S^1$. The minimality of $M_U$ implies that $f$ is extendable over $\bd_F(U\cap F)\cup P$ for any any closed set
$P\subsetneqq M_U$.
Because $f$ is extendable over $(\overline{U\cap F})\setminus V$, $M_U\cap V\neq\varnothing$. It is clear that $M_U$ is a continuum.

Assume that $O$ is a nonempty open subset of $M_U$ such that $\dim O \le 1$. Taking a smaller open subset of $O$, we may assume that $\dim\overline O \le 1$. There are two possibilities, either $O\con \bd_F(U\cap F)$ or $O\setminus \bd_F(U\cap F)\neq\varnothing$.
If $O\con \bd_F(U\cap F)$, $M_U\setminus O$ is a proper closed subset of $M_U$ having the same properties as $M_U$, which contradicts minimality. If
$O'=O\setminus \bd_F(U\cap F)\neq\varnothing$, then $P=M_U\setminus O'$ is a proper closed subset of $M_U$. So, there is an extension $f_1:\bd_F(U\cap F)\cup P\to\mathbb S^1$ of $f$. Since $\dim\overline {O'}\leq 1$, we can extend $f_1$ over $\bd_F(U\cap F)\cup M_U$, a contradiction. Therefore, $M_U$ is everywhere $2$-dimensional.
\end{proof}

Now, we can complete the proof of Theorem 1.1. Choose open neighborhoods $U$ and $V$ of $b$, closed sets $F\subset G$ and $M_U\subset\CL{U\cap F}$ and a continuous function $f\from \bd_F(U\cap F) \to \sphere^1$ satisfying
 the conditions $(1) - (4)$ from Lemma \ref{kuzminov}. Let also $J\cap\bd U=\{c\}$ and $C=[c,b]$. We can also assume that $V$ satisfies the additional property that for every two points $p,q\in V$ there is a homeomorphism $\varphi$ of $G$ supported on $V$ with $\varphi(p)=q$.
  We may consequently assume without loss of generality that $b\in M_U$. Indeed, if $b\not\in M_U$  we take a point $x\in M_U\cap V$ and a homeomorphism $\varphi$ of $G$ supported on $V$ such that $\varphi(x)=b$. Then the set $\varphi(M_U)$ satisfies all condition from Lemma \ref{kuzminov} and contains $b$.
 Since $M_U$ is everywhere $2$-dimensional, $\dim (M_U\cap V)=2$. Hence, $M_U\cap V$ meets at least one of the sets $G_i$, $i=1,2$.

 Assume first that $M_U\cap V\cap G_1\neq\varnothing$ but $M_U\cap V\cap G_2=\varnothing$.

 Then $M_U\cap W$ meets $G_1$ for every neighborhood $W$ of $b$ with $W\subset V$. Indeed, because $\dim M_U\cap W=2$ and $M_U\cap W\cap G_2=\varnothing$ it follows that $M_U\cap G_1\cap W\neq\varnothing$.
 There consequently is a neighborhood $W$ of $b$ in $G$ such that
\begin{itemize}
\item[(5)] $\overline W\subset V$, $(M_U\cap V)\cap (G_1\setminus\overline W)\neq\varnothing$ and  $M_U\cap G_1\cap W\neq\varnothing$;
\item[(6)] For every $x,y\in W$ there is a homeomorphism $h$ of $G$ supported on $W$ with $h(x)=y$.
\end{itemize}
Finally, choose points $x\in M_U\cap G_1\cap W$ and $y\in W\cap G_2$ and a homeomorphism $h:G\to G$ supported on $W$ with $h(x)=y$. Since $h(z)=z$ for
all points $z\in (M_U\cap V)\cap (G_1\setminus\overline W)$, the set $\widetilde K=h(M_U)$ meets both $G_1$ and $G_2$.
Moreover, the function
$f$ is not extendable over $\bd_F(U\cap F)\cup\widetilde K$ (otherwise $f$ would be extendable over $\bd_F(U\cap F)\cup M_U$). On the other hand, since each of the sets $Q_i=h^{-1}(\widetilde K\cap\overline G_i)$, $i=1,2$, is a proper closed subset of $M_U$, $f$ is extendable over each of the sets
$\bd_F(U\cap F)\cup(\widetilde K\cap\overline G_i)$. Let $\gamma:\bd U\to\mathbb S^1$ be an extension of $f$ (recall that $\dim\bd U\leq 1$ and $\bd_F(U\cap F)$ is a closed subset of $\bd U$, so such $\gamma$ exists). Because $f$ is not extendable over $\bd_F(U\cap F)\cup\widetilde K$, $\gamma$ is not extendable over the set
$K=\bd U\cup\widetilde K\cup C$. Denote
$P_i=C\cup(K\cap\overline G_i)$, $i=1,2$. Obviously, $P_1\cup P_2=K$ and $P_1\cap P_2=C$.
Then for each $i$ we have $P_i\cap\bd U=\{c\}\cup(\bd U\cap\overline G_i)$. So,
the function
$\gamma|(P_i\cap\bd U)$ is extendable over the set
$P_i$ because $\dim C\cup\bd U=1$.
Hence, we can apply
Proposition \ref{promembrane} (with $A=\bd U$) to conclude that there is a continuous function $\beta:C\to\mathbb S^1$ such that $\beta$ is not nullhomotopic, a contradiction.

Assume next that $M_U\cap V$ meets both $G_1$ and $G_2$. We can now proceed as above (considering $M_U$ instead of $\widetilde K$) to obtain the desired contradiction.


\end{document}